\documentclass[11pt]{amsart}

\usepackage[T1]{fontenc}
\usepackage{lmodern}
\usepackage{microtype}
\usepackage[margin=1.34in]{geometry}

\usepackage{amsmath,amssymb,amsfonts,mathtools}
\usepackage{amscd}
\usepackage{latexsym}
\usepackage{mathrsfs}
\usepackage{accents}
\usepackage{esint}
\usepackage{braket}

\usepackage{graphicx}
\usepackage{enumitem}

\usepackage{xcolor}
\usepackage{aliascnt}
\usepackage[
  colorlinks=true,
  linkcolor=blue,
  citecolor=blue,
  urlcolor=blue
]{hyperref}
\usepackage[nameinlink,noabbrev]{cleveref}

\hypersetup{
  pdftitle={No compromise in the liquid drop model},
  pdfauthor={Otis Chodosh and Matilde Gianocca}
}

\setlist{itemsep=3pt}
\allowdisplaybreaks
\numberwithin{equation}{section}

\theoremstyle{plain}

\newtheorem{theorem}{Theorem}[section]

\newaliascnt{proposition}{theorem}
\newtheorem{proposition}[proposition]{Proposition}
\aliascntresetthe{proposition}

\newaliascnt{lemma}{theorem}
\newtheorem{lemma}[lemma]{Lemma}
\aliascntresetthe{lemma}

\newaliascnt{corollary}{theorem}
\newtheorem{corollary}[corollary]{Corollary}
\aliascntresetthe{corollary}

\theoremstyle{remark}

\newaliascnt{remark}{theorem}
\newtheorem{remark}[remark]{Remark}
\aliascntresetthe{remark}

\crefname{theorem}{theorem}{theorems}
\Crefname{theorem}{Theorem}{Theorems}

\crefname{proposition}{proposition}{propositions}
\Crefname{proposition}{Proposition}{Propositions}

\crefname{lemma}{lemma}{lemmas}
\Crefname{lemma}{Lemma}{Lemmas}

\crefname{corollary}{corollary}{corollaries}
\Crefname{corollary}{Corollary}{Corollaries}

\crefname{remark}{remark}{remarks}
\Crefname{remark}{Remark}{Remarks}
\theoremstyle{plain}
\newtheorem{theo}[theorem]{Theorem}

\theoremstyle{definition}

\theoremstyle{remark}

\crefalias{theo}{theorem}
\crefalias{prop}{proposition}
\crefalias{lemm}{lemma}
\crefalias{coro}{corollary}
\crefalias{rema}{remark}

\newcommand{\RR}{\mathbb{R}}

\newcommand{\R}{\mathbb{R}}

\newcommand{\cE}{\mathcal{E}}

\DeclareMathOperator{\Div}{div}

\DeclareMathOperator{\capacity}{Cap}

\let\oldmarginpar\marginpar
\renewcommand{\marginpar}[1]{%
  \oldmarginpar[\raggedleft\footnotesize #1]{\raggedright\footnotesize #1}}

\title[No compromise in the liquid drop model]{No compromise in the liquid drop model}

\author[O. Chodosh]{Otis Chodosh}
\address{Department of Mathematics, Stanford University, Stanford, CA 94305, USA}
\email{ochodosh@stanford.edu}

\author[M. Gianocca]{Matilde Gianocca}
\address{Department of Mathematics, ETH Z\"urich, R\"amistrasse 101, 8092 Z\"urich, Switzerland}
\email{matilde.gianocca@math.ethz.ch}

\date{}

\begin{document}

\begin{abstract}
We characterize minimizers in Gamow's liquid drop problem. 
\end{abstract}

\maketitle

\section*{AI usage statement}
This paper describes results that were obtained by ChatGPT 5.6 Pro over a series of chats, without significant assistance from the authors. We have checked and reworked the proof but the fundamental strategy remains close to the original output. This article does not contain AI-written text. 

\section{Introduction}

For $\Omega\subset \RR^3$ measurable we define Gamow's liquid drop functional
\begin{equation}\label{eq:E}
\cE(\Omega) : = P(\Omega) + D(\Omega), 
\end{equation}
where
\begin{equation}\label{eq:D}
D(\Omega) : = \frac 12 \iint_{\Omega\times\Omega} \frac{dxdy}{|x-y|}
\end{equation}
is the Coulomb energy and 
\begin{equation}\label{eq:P}
P(\Omega) = \sup\left\{ \int_\Omega \Div X : X \in C^1_c(\RR^3;\RR^3), |X|\leq 1\right\}
\end{equation}
 is the perimeter of $\Omega$. Gamow introduced the functional \eqref{eq:E} in 1928 to model the nucleus of an atom \cite{Gamow}. See \cite{CMT:Notices} for discussions concerning the mathematical aspects of this problem.
 
We are interested in the problem of minimizing $\cE(\cdot)$ for sets of a fixed volume. Set
\begin{equation}\label{eq:mass-to-split}
V_* : = \frac{2-2^{\frac 23}}{2^{\frac23}-1} \frac{|B_1| P( B_1)}{\frac 12 \iint_{B_1\times B_1}|x-y|^{-1} dxdy} = 5\frac{2-2^{\frac 23}}{2^{\frac23}-1} \approx 3.51.
\end{equation}
The relevance of \(V_*\) is that two balls of volume \(V/2\) placed very far
apart have lower energy than a single ball of volume \(V\), whenever
\(V>V_*\). The phrase ``no compromise'' refers to the fact that, at fixed
volume, the ball minimizes perimeter but maximizes Coulomb energy.

The main result is as follows.
\begin{theo}\label{theo:main}
For \(V\le V_*\), round balls of volume \(V\) uniquely minimize
\(\cE(\cdot)\) among sets of fixed volume \(V\). For \(V>V_*\), no minimizer
exists among sets of fixed volume \(V\).
\end{theo}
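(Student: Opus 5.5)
The plan is to work with the ground state energy $e(V):=\inf\{\cE(\Omega): |\Omega|=V\}$ and to compare it with the energy of configurations of balls. Write $\cE(B^{(V)}) = aV^{2/3}+bV^{5/3}$ for the ball of volume $V$, where $a=P(B_1)|B_1|^{-2/3}$ and $b=D(B_1)|B_1|^{-5/3}$.

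The first step is the standard \emph{subadditivity} $e(V)\le e(V_1)+e(V-V_1)$. It follows by placing near-optimal competitors far apart: the Coulomb interaction decays like the inverse distance, and truncation to bounded sets costs arbitrarily little. A direct computation with the ball profile then shows that $2\,\cE(B^{(V/2)})<\cE(B^{(V)})$ exactly when $V>V_*$, which is the defining property of \eqref{eq:mass-to-split}.

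The nonexistence half, $V>V_*$, would follow from a strict inequality $e(V)<\cE(\Omega)$ for every $\Omega$ of volume $V$. The plan is:
\begin{enumerate}
\item Show that any minimizer is, by the usual regularity theory for $\Lambda$-minimizers of perimeter, bounded, connected and smooth.
\item Rule such a set out with a cutting argument in the spirit of Frank--Killip--Nam. Average over all planes $H$ in a fixed direction family the identity $\cE(\Omega)=\cE(\Omega\cap H^+)+\cE(\Omega\cap H^-)-\mathcal H^2(\Omega\cap H)\cdot 2 + I(\Omega\cap H^+,\Omega\cap H^-)$, where $I$ is the mutual Coulomb interaction.
\item Combine this with minimality, $\cE(\Omega)\le e(V_1)+e(V_2)$, to get a one-variable inequality for $e$. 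Compare it with the ball-splitting profile to reach a contradiction as soon as $V>V_*$.
\end{enumerate}

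The hard content is sharpness. Known cutting arguments only give nonexistence for $V\gtrsim 8$, so the averaging has to be done with weights adapted to the ball. The aim is an inequality that becomes an equality for two far-apart balls of volume $V/2$.

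For $V\le V_*$, the plan is to prove the lower bound $e(V)\ge \min_{k,\ \sum V_i=V}\sum_i \cE(B^{(V_i)})$ and then show that this finite-dimensional minimum is attained by the single ball. The latter is an elementary concavity check: $t\mapsto at^{2/3}+bt^{5/3}$ is strictly subadditive on $[0,V_*]$. For the lower bound, I would take a generalized minimizer, that is, a finite collection of mutually infinitely distant minimizers, whose existence follows from concentration compactness. Each piece $\Omega_i$ satisfies the Euler--Lagrange equation $H_{\partial\Omega_i}+2\phi_{\Omega_i}=\lambda$ with a common Lagrange multiplier $\lambda$, and is volume-constrained stable.

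The decisive step, and the one I expect to be the main obstacle, is to show that each stable critical piece of volume at most $V_*$ is a ball. Quantitative isoperimetry (Fuglede's estimate) together with the known results of Knüpfer--Muratov and Julin handles small volumes, but does not reach up to $V_*$. So I would try a global stability argument. The idea is to test the second variation with the translation fields and with the dilation-type field $x\cdot\nu$, and to use the Alexandrov-type rigidity available for $H=\lambda-2\phi$ with $\phi$ superharmonic. The goal is an inequality of the form $\cE(\Omega)\ge \cE(B^{(|\Omega|)})$ whenever $|\Omega|\le V_*$, with equality only for balls.

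Uniqueness would then come from the equality cases: equality in the Fuglede/rigidity step forces each piece to be round, and strict subadditivity forces a single piece.
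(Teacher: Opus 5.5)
There is a genuine gap: both decisive steps of your plan are placeholders, and the tools you name would not supply them.

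\textbf{The case $V>V_*$.} The only information a plane cut extracts from minimality is $I(\Omega\cap H^+,\Omega\cap H^-)\le 2\mathcal H^2(\Omega\cap H)$ for each plane $H$. Averaged uniformly over all planes, this becomes the shape-independent inequality $V^2/4\le 2V$. That is exactly where the Frank--Killip--Nam threshold $8$ comes from, and refinements only reach $7.5$.

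Worse, a round ball of volume $V$ satisfies the inequality for every single plane as long as $V$ is below about $6$ (the central cut is the worst). Such a ball is bounded, connected, smooth and stationary. So no reweighting of the cut inequalities can exclude a minimizer with $V\in(V_*,6]$. An argument reaching $V_*$ must use $\cE(\Omega)\le 2\cE(B_{V/2})$ in a sharp way, and cutting never produces balls.

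\textbf{The case $V\le V_*$.} Your ``decisive step'' is essentially the whole theorem; it is in fact a stronger claim about all stable critical points. The suggested tools do not address it:
\begin{itemize}
\item The translation fields are Jacobi fields, so the second variation vanishes on them.
\item The field $x\cdot\nu$ is not volume-preserving, since $\int_{\partial\Omega}x\cdot\nu=3V$.
\item Alexandrov reflection does not close for the repulsive Coulomb term. At a first touching point $p$ of the reflected cap, with $p^*$ its mirror image, one gets $v_\Omega(p)>v_\Omega(p^*)$. This agrees in sign with the geometric comparison of mean curvatures instead of contradicting it.
\end{itemize}
Nothing in the sketch produces the constant $V_*$; earlier first-variation arguments only reached $V\le 1$.

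\textbf{The missing idea.} What is needed is a lower bound on the Lagrange multiplier of any stationary set that is sharp on balls and feeds into a single energy comparison. The paper obtains it as follows.
\begin{itemize}
\item It integrates $H+v_\Omega=\lambda$ over $\partial K$, where $K$ is the hull of $\Omega$, against the equilibrium density $(4\pi)^{-1}|\nabla u|$, with $u$ the capacitary potential of $K$.
\item Two integrations by parts turn the Coulomb term into exactly $V$, for every shape.
\item The Agostiniani--Mazzieri monotone quantity, improved by Gauss--Bonnet on the connected level sets of $u$, gives $\frac1{4\pi}\int_{\partial K}H|\nabla u|\ge\frac1\pi\int_{\partial K}|\nabla u|^2-2\ge 2$.
\item With Cauchy--Schwarz this yields $\lambda\sqrt{P(\Omega)/4\pi}\ge V+2$ for $V\le 6$, and $\ge 4\sqrt{V-2}$ for $V\ge 6$.
\end{itemize}
The field $x\cdot\nu$ does enter, but only through the first variation, as the dilation identity $3V\lambda=5\cE(\Omega)-3P(\Omega)$.

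Combining these with $\cE(\Omega)\le\cE(B_V)$ gives $\delta(4-V+9\delta+3\delta^2)\le 0$, where $\delta=\sqrt{P(\Omega)/P(B_V)}-1$. Since $V\le V_*<4$, this forces $\delta=0$, so $\Omega$ is a ball. Existence for $V\le V_*$ is quoted from Frank--Nam, so neither concentration compactness nor stability is needed. Combining instead with $\cE(\Omega)\le 2\cE(B_{V/2})$ gives a contradiction on $(V_*,8]$, and Frank--Killip--Nam cover $V>8$.
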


This conjecture has appeared in several places, including
\cite{ChoksiPeletier,FrankLieb:compact-least-density,
FrankNam:existence-nonexistence}.

Theorem \ref{theo:main} resolves the question (cf.\ \cite{FrankLieb:compact-least-density}) of the minimal binding energy. 
\begin{corollary}\label{cor:binding}
The minimal binding energy satisfies
\[
e_*:=\inf_{0<|\Omega|<\infty}\frac{\cE(\Omega)}{|\Omega|}
=3\left(\frac{9\pi}{5}\right)^{1/3}.
\]
The infimum is attained by
balls of volume \(5/2\).
\end{corollary}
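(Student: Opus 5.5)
We derive the corollary from Theorem~\ref{theo:main} combined with a scaling computation for balls. First we reduce the infimum over all sets to an infimum over balls. Set $E(V):=\inf\{\cE(\Omega):|\Omega|=V\}$. By Theorem~\ref{theo:main}, for $V\le V_*$ we have $E(V)=\cE(B^{(V)})$, where $B^{(V)}$ denotes a ball of volume $V$.

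For $V>V_*$ we use subadditivity. Write $V=kv$ with $k\ge 2$ an integer and $v\le V_*$. Place $k$ balls of volume $v$ far apart; the interaction energy tends to zero, so $E(V)\le k\,\cE(B^{(v)})$. Hence
\[
\frac{E(V)}{V}\le \frac{\cE(B^{(v)})}{v}.
\]
For the lower bound we split into cases. Either $V\le V_*$, where $E(V)/V=\cE(B^{(V)})/V$ directly. Or $V>V_*$, where we need $E(V)/V\ge \inf_{v\le V_*}\cE(B^{(v)})/v$.

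The main obstacle is this last inequality, since Theorem~\ref{theo:main} gives no minimizer for $V>V_*$. We handle it by a simple homogeneity trick on an arbitrary $\Omega$, using $P(\lambda\Omega)=\lambda^2P(\Omega)$ and $D(\lambda\Omega)=\lambda^5 D(\Omega)$. Fix $\Omega$ with $|\Omega|=V$ and consider $f(\lambda)=\cE(\lambda\Omega)/|\lambda\Omega|=(\lambda^{-1}P+\lambda^2 D)/V$. This is a convex function of $\lambda>0$, minimized at some $\lambda_0$. Two facts control it:
\begin{itemize}
\item If $V_0:=\lambda_0^3V\le V_*$, then $\cE(\Omega)/V=f(1)\ge f(\lambda_0)\ge E(V_0)/V_0=\cE(B^{(V_0)})/V_0$.
\item If $\lambda_0^3V>V_*$, then $\lambda_0$ exceeds $\lambda_1:=(V_*/V)^{1/3}$. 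Since $1\ge\lambda_1$ (because $V>V_*$), both $1$ and $\lambda_0$ lie on the same side of $\lambda_1$. If $1\le\lambda_0$, convexity gives $f(1)\ge f(\lambda_0)$, but we want to compare with $f(\lambda_1)$ instead. Here we use that $f$ is decreasing on $(0,\lambda_0]$, so $f(1)\ge f(\lambda_0)$ still holds, and $f(\lambda_0)\le f(\lambda_1)$ goes the wrong way.
\end{itemize}

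A cleaner route avoids this case analysis entirely. Define $e_*$ over all sets. By scaling, $e_*=\inf_\Omega\min_\lambda f_\Omega(\lambda)$. The minimum over $\lambda$ is attained where $P=2\lambda_0^3D$, that is, where the rescaled set satisfies $P=2D$. Any such set $\Omega'$ with $|\Omega'|=V'$ satisfies $\cE(\Omega')/V'=\tfrac32P(\Omega')/V'$. The isoperimetric inequality combined with the Riesz-type bound on $D$ forces $V'$ to be bounded, but we still need $V'\le V_*$.

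The honest approach is therefore to rely on the paper's stronger statement, namely the monotonicity established in its later sections. Lacking that, we use the following bootstrap. Suppose $\inf_V E(V)/V$ is approached along a sequence $V_j>V_*$. By the concentration-compactness theory in the references, each near-minimizing sequence at volume $V$ splits into finitely many generalized minimizers of volumes $v_i$ with $E(V)=\sum E(v_i)$. Each nontrivial minimizer has $v_i\le V_*$ by Theorem~\ref{theo:main}, so it is a ball. Then
\[
\frac{E(V)}{V}=\frac{\sum_i v_i\,\cE(B^{(v_i)})/v_i}{\sum_i v_i}\ge\min_i\frac{\cE(B^{(v_i)})}{v_i}.
\]
This reduces $e_*$ to $\inf_{0<v\le V_*}g(v)$, with $g(v)=\cE(B^{(v)})/v$.

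It remains to compute this infimum. With $B^{(v)}=rB_1$ we have $v=\tfrac{4\pi}{3}r^3$, $P=4\pi r^2$, and $D(B_1)=\tfrac{16\pi^2}{15}$ (consistent with \eqref{eq:mass-to-split}, since $|B_1|P(B_1)/D(B_1)=5$), so $D=\tfrac{16\pi^2}{15}r^5$. Thus
\[
g=\frac{3}{r}+\frac{4\pi}{5}r^2.
\]
Setting $g'=0$ gives $r^3=\tfrac{15}{8\pi}$, i.e.\ $v=\tfrac52$, which is less than $V_*\approx3.51$, so the constraint $v\le V_*$ is inactive. Substituting, $g=\tfrac{3}{r}+\tfrac{3}{2r}=\tfrac{9}{2r}=\tfrac92\left(\tfrac{8\pi}{15}\right)^{1/3}=3\left(\tfrac{27\cdot8\pi}{8\cdot15}\right)^{1/3}=3\left(\tfrac{9\pi}{5}\right)^{1/3}$. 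Attainment by balls of volume $5/2$ then follows directly from this computation.
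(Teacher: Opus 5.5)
Your final ``bootstrap'' argument is correct, and it takes a different route from the paper. Cut the write-up down to that argument. The homogeneity attempt and the ``cleaner route'' are dead ends you abandon yourself. The paper contains no monotonicity result bearing on this corollary, so that sentence should go. The subadditivity upper bound is also unnecessary, since the ball of volume $5/2$ gives $e_*\le g(5/2)$ directly.

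What remains is as follows. For every $V>0$ there are $v_1,\dots,v_N>0$ with $\sum_i v_i=V$, $E(V)=\sum_i E(v_i)$, and each $E(v_i)$ attained. This is the existence of generalized minimizers; cite it precisely, e.g.\ Kn\"upfer--Muratov--Novaga \cite{KnupferMuratovNovaga:low-density} or Frank--Lieb, rather than as ``concentration-compactness theory''. By Theorem~\ref{theo:main}, each $E(v_i)$ is attained by a ball. Hence $E(V)/V$ is a weighted average of the $g(v_i)$ and is at least $g(5/2)$. Your computation of $g$ and its minimizer is correct.

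The paper instead uses \cite[Theorem~3.2]{FrankLieb:compact-least-density}, which says $e_*$ itself is attained. An optimizer must minimize $\cE$ at its own volume, since otherwise a same-volume competitor would have a smaller ratio. So it is a ball by Theorem~\ref{theo:main}, and only the one-variable minimization remains. That route is shorter and shows at once that \emph{every} optimizer is a ball of volume $5/2$.

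Your route replaces attainment of $e_*$ by attainment of generalized minimizers at every volume. You get attainment of $e_*$ for free, plus the byproduct $E(V)=\sum_i\cE(B^{(v_i)})$ for all $V$. However, to conclude that balls of volume $5/2$ are the only optimizers, you still need the paper's one-line observation. You also need that $g$ has a unique minimum at $5/2$.
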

See \cite{KnupferMuratovNovaga, FLS:low-density} for some related considerations.

\subsection{Previous work} The first-named author and Ruohoniemi proved \cite{CR} that the balls minimize for volumes $V\leq 1$. Earlier work of Kn\"upfer--Muratov showed \cite{KnupferMuratov1,KnupferMuratov2} (cf.\ \cite{BonaciniCristoferi,Julin,MuratovZaleski,FigalliFuscoMaggiMillot,ChoksiNeumayerTopaloglu}) that balls uniquely minimize $\cE(\cdot)$ for very small $V$.  Kn\"upfer--Muratov \cite{KnupferMuratov2} and Lu--Otto \cite{LuOtto} proved non-existence for sufficiently large volumes. The quantitative estimate of non-existence for $V > 8$ was obtained by Frank--Killip--Nam  \cite{FrankKillipNam} and very recently improved to nonexistence for $V\geq 7.5$ by Schulz \cite{Schulz}. Finally, we note that Frank--Nam proved \cite{FrankNam:existence-nonexistence} existence of a minimizer in the full range $V \leq V_*$. See also \cite{AlbertiChoksiOtto,ChoksiPeletier1,ChoksiPeletier,RenWei:torus,CicaleseSpadaro,RenWei:2tori,KnupferMuratovNovaga:low-density,Frank:non-spherical-drops,EmmertFrankKonig}. 

\subsection{Description of the strategy} The strategy (as in the first-named author's work with Ruohoniemi \cite{CR}) is to integrate the first-variation formula 
\[
H + v_\Omega = \lambda 
\]
where $v_\Omega(x) = \int_{\Omega}\frac{dy}{|y-x|}$ satisfied by a minimizer $\Omega$ along $\partial\Omega$ (more precisely, one should fill any bounded regions in $\RR^3\setminus\Omega$ to obtain $K\supset\Omega$ but we ignore this distinction here).  The key idea used here is to introduce the capacitary potential $u$ satisfying $\Delta u =0$ on the complement of $\Omega$, with $u=1$ on $\partial\Omega$ and $u\to0$ at infinity. This is a natural quantity to consider due to its close relationship with the Coulombic energy term $D(\Omega)$ (cf.\ \Cref{lem:equilibrium}). 

In contrast with \cite{CR} where the first variation was integrated over $\partial\Omega$ directly, here it is weighted by $|\nabla u|$. 
\[
\int_{\partial\Omega} H|\nabla u| + \int_{\partial\Omega} v_\Omega |\nabla u| = \lambda \int_{\partial\Omega} |\nabla u| = 4\pi \lambda \capacity(\Omega).
\]
the second term is seen to be equal to $4\pi |\Omega|$ after two integration by parts (cf.\ \Cref{lem:equilibrium}).

At this point we use an estimate (cf.\ \Cref{prop:capacity})
\[
\int_{\partial\Omega} H |\nabla u| \geq 4 \int_{\partial\Omega} |\nabla u|^2 -8\pi \geq 8\pi.
\]
This generalizes an estimate obtained by Agostiniani--Mazzieri \cite{AM} (the improvement comes from using Gauss--Bonnet on a specific term; this only applies for this specific weight and only in $\RR^3$). This gives a strong inequality relating the perimeter, Coulomb energy, volume, and capacity of a minimizer. Combined with explicit comparisons (against one or two balls, in the appropriate regime) this suffices to prove \Cref{theo:main}. 

As an aside, we observe that the monotone quantity used in the proof of \Cref{prop:capacity} is the same as the one used by Israel in his proof of static uniqueness of Schwarzschild \cite{Isr67} (cf.\ \cite{Rob77}). Related capacitary methods have been used to prove Minkowski inequalities \cite{FMP19,AFM22,BFM24} and other geometric inequalities \cite{AMO22,AMMO25}.

\subsection*{Acknowledgements}
O.C. was partially supported by a Terman Fellowship and an NSF grant
(DMS-2304432). M.G. thanks the Department of Mathematics at Stanford University
for its hospitality during the completion of this work and M. Badran for
introducing her to the problem.

\section{The capacitary estimate}\label{sec:cap}

Let $\Omega\subset\R^3$ be a bounded connected $C^3$ stationary domain of
volume $V$. Then
\[
H+v_\Omega=\lambda\quad\text{on }\partial\Omega,
\qquad
v_\Omega(x):=\int_\Omega\frac{dy}{|x-y|},
\]
where $H$ is the sum of the principal curvatures for the outward normal.
All hypersurface integrals are taken with respect to the induced area measure.

Let \(U_\infty\) be the unbounded component of
\(\R^3\setminus\overline\Omega\), and set
\[
K:=\R^3\setminus U_\infty.
\]
Then \(K\) has \(C^3\) boundary, \(K\) and \(\R^3\setminus K\) are connected,
and
\[
\overline\Omega\subset K,
\qquad
\partial K\subset\partial\Omega,
\qquad
P(K)\le P(\Omega).
\]
The outward normals agree on \(\partial K\). Let \(u\) be the capacitary
potential of \(K\):
\[
\Delta u=0\quad\text{in }\R^3\setminus K,
\qquad
u=1\quad\text{on }\partial K,
\qquad
u\to0\quad\text{at infinity}.
\]
We use the normalization
\[
\capacity(K)
:=
\frac1{4\pi}\int_{\R^3\setminus K}|\nabla u|^2
=
\frac1{4\pi}\int_{\partial K}|\nabla u|.
\]
Since $\Delta u=0$,
\[
\int\limits_{\{u=s\}}|\nabla u|=4\pi\capacity(K)
\]
for every regular $s\in(0,1)$. For $t\ge1$, set
\[
\Phi(t):=t^2\int\limits_{\{u=1/t\}}|\nabla u|^2.
\]
For a ball, $\Phi\equiv4\pi$. In general,
\cite[Theorem~1.1(ii)--(iii)]{AM} shows that $\Phi\in C^1([1,\infty))$ is
nonincreasing and convex, and \cite[(1.6)]{AM} gives
\[
\lim_{t\to\infty}\Phi(t)=4\pi.
\]

For \(t\ge1\), write
\[
\Sigma_t:=\{u=1/t\}.
\]
Every regular \(\Sigma_t\) is connected by \Cref{lem:levels}, and \(t=1\)
is regular by the Hopf lemma. All geometric quantities on \(\Sigma_t\) are
computed with respect to \(-\nabla u/|\nabla u|\). 
\begin{lemma}\label{lem:boundary-estimate}
For every regular \(t\ge1\),
\[
\int_{\Sigma_t}H|\nabla u|
\ge
4t\int_{\Sigma_t}|\nabla u|^2-\frac{8\pi}{t}.
\]
\end{lemma}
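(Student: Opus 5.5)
The plan is to recast the inequality as a differential inequality for $\Phi$ and prove it by a monotonicity argument in the spirit of Israel, with Gauss--Bonnet supplying the improvement over Agostiniani--Mazzieri.

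\textbf{Reformulation.} With the normal $\nu=-\nabla u/|\nabla u|$, harmonicity gives on regular level sets
\[
H=\frac{\nabla|\nabla u|\cdot\nabla u}{|\nabla u|^{2}}.
\]
Applying the divergence theorem to the vector field $|\nabla u|\nabla u$ between two level sets yields
\[
\frac{d}{ds}\int_{\{u=s\}}|\nabla u|^2 \;=\; \pm\int_{\{u=s\}} H|\nabla u| ,
\]
with the sign fixed by the orientation. Substituting $s=1/t$, I expect
\[
\Phi'(t)=2t\int_{\Sigma_t}|\nabla u|^2-\int_{\Sigma_t}H|\nabla u| ,
\]
and I will confirm the signs on the ball, where $\Phi'\equiv0$. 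With this formula, the statement of \Cref{lem:boundary-estimate} is equivalent to
\[
F(t):=t\,\Phi'(t)+2\Phi(t)\le 8\pi \qquad\text{for regular } t\ge1 .
\]
For a ball $F\equiv 8\pi$, which is consistent with equality in that case.

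\textbf{Monotonicity of $F$.} Since $\Phi\to4\pi$ and $\Phi$ is convex and nonincreasing, $t\Phi'(t)\to0$ along a sequence $t\to\infty$. I would check this directly from the expansion $u=\capacity(K)/|x|+O(|x|^{-2})$ with matching derivative bounds, which gives $F(t)\to 8\pi$. It then suffices to show that $F$ is nondecreasing in $t$; equivalently, after integrating from $t$ to $\infty$, that $8\pi-F(t)$ equals an integral over $\{u<1/t\}$ of a nonnegative quantity.

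To obtain this I will compute $F'=t\Phi''+3\Phi'$ via the coarea formula. Differentiating $\int_{\Sigma_t} H|\nabla u|$ in $t$ produces the standard terms. The first is Bochner's formula $\Delta|\nabla u|^2=2|\nabla^2u|^2$. The second is the splitting
\[
|\nabla^2u|^2=|\nabla u|^2|A|^2+2|\nabla_\Sigma|\nabla u||^2+(\partial_\nu|\nabla u|)^2 .
\]
The third is the level-set identity $\Delta_\Sigma$-type relation coming from $\Delta u=0$.

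In Agostiniani--Mazzieri the term $|A|^2$ is only bounded by $\tfrac12H^2$. Here I will instead keep it exactly and use the Gauss equation $H^2-|A|^2=2K_{\Sigma_t}$ in flat $\RR^3$. Gauss--Bonnet then gives
\[
\int_{\Sigma_t}K_{\Sigma_t}=2\pi\chi(\Sigma_t)\le4\pi ,
\]
because $\Sigma_t$ is connected by \Cref{lem:levels}. The precise weight $t^2|\nabla u|^2$ (equivalently, weighting by $|\nabla u|$ after dividing by the constant flux $4\pi\capacity(K)$) should make the Gauss curvature appear with constant coefficient. That is what produces the constant $8\pi$ instead of a term involving $\Phi$. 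The remaining contributions should assemble into squares such as
\[
\frac{|\nabla_\Sigma|\nabla u||^2}{|\nabla u|}
\quad\text{and}\quad
|\nabla u|\,\bigl|\mathring A\bigr|^2 ,
\]
which is exactly the structure of Israel's identity.

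\textbf{Main obstacle.} The hard step is the algebra in the derivative of $F$: choosing the right combination, namely the factor $t$ on $\Phi'$ and the coefficient $2$ on $\Phi$, so that all non-sign-definite terms cancel and only squares plus the Gauss--Bonnet term remain. I would first carry out the computation pointwise, in terms of $|\nabla u|$, $H$, $|A|$ and $\nabla_\Sigma|\nabla u|$ on a level set, checking it on the ball. Only after that would I integrate.

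\textbf{Regularity issues.} Critical values of $u$ need care. I would handle them as in Agostiniani--Mazzieri, since $\Phi$ is $C^1$, critical values have measure zero, and the boundary terms behave well under the coarea formula. Alternatively, I would establish the integrated form of the inequality between regular values and pass to limits.
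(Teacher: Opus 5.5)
There is a genuine gap. Your reformulation is correct: $\Phi'(t)=2t\int_{\Sigma_t}|\nabla u|^2-\int_{\Sigma_t}H|\nabla u|$, so the lemma is equivalent to $F(t)=t\Phi'(t)+2\Phi(t)\le 8\pi$. Your ingredients are also the right ones: Bochner, the Gauss equation, Gauss--Bonnet on connected level sets, and the asymptotics.

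The gap is the step ``it suffices to show $F$ is nondecreasing,'' with $F'$ expected to reduce to squares plus a Gauss--Bonnet term. It does not. Set
\[
J(t):=\int_{\Sigma_t}\Bigl((H-2t|\nabla u|)^2+|\nabla^\top\log|\nabla u||^2\Bigr)+8\pi\operatorname{genus}(\Sigma_t)\ \ge 0,
\]
which is exactly what those ingredients produce. At regular $t$, your computation of $F'=t\Phi''+3\Phi'$ gives
\[
tF'(t)=J(t)-\bigl(8\pi-F(t)\bigr).
\]
Equivalently, $tF'=\int_{\Sigma_t}\bigl((H-\tfrac52t|\nabla u|)^2+|\nabla^\top\log|\nabla u||^2\bigr)+\tfrac74\Phi(t)-8\pi+8\pi\operatorname{genus}(\Sigma_t)$. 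Here $\tfrac74\Phi-8\pi$ need not be nonnegative, since you only know $\Phi\ge4\pi$.

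So the very quantity $8\pi-F$ you are trying to show is nonnegative enters $F'$ with a minus sign. The coefficients $t$ and $2$ are forced by the lemma, so there is nothing left to tune. Indeed $F'\ge0$ would require $J(t)\ge t\int_t^\infty J(\tau)\tau^{-2}\,d\tau$, an averaged monotonicity of $J$ that nothing in the setting provides. Checking on the ball cannot reveal this, because $J\equiv0$ and $F\equiv 8\pi$ there.

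Your parenthetical ``equivalently'' is also off by exactly this factor. What integrating actually gives is $8\pi-F(t)\ge t\int_t^\infty J(\tau)\tau^{-2}\,d\tau$, with $t$ outside the integral. That yields $F\le 8\pi$, but not monotonicity of $F$.

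The missing idea is to divide by $t$. The same identity gives
\[
\frac{d}{dt}\,\frac{8\pi-F(t)}{t}=-\frac{J(t)}{t^2}\le 0,
\]
so $Q(t):=(8\pi-F(t))/t=-\Phi'(t)-\tfrac2t\bigl(\Phi(t)-4\pi\bigr)$ is nonincreasing. Moreover $Q(t)\to0$ using only $\Phi\to4\pi$ and $\Phi'\to0$, not $t\Phi'\to0$. Hence $Q\ge0$, which is the lemma.

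In the variable $s=1/t$, $Q$ equals $\int_{\{u=s\}}\bigl(H|\nabla u|-4s^{-1}|\nabla u|^2\bigr)+8\pi s$. This is the flux of $X=\nabla|\nabla u|-4u^{-1}|\nabla u|\nabla u$ plus $8\pi s$, and in the sense of distributions
\[
\operatorname{div}X\ge|\nabla u|\Bigl[(H-2u^{-1}|\nabla u|)^2+|\nabla^\top\log|\nabla u||^2-2\kappa\Bigr].
\]
This is the paper's argument (Israel's quantity). Working with this distributional divergence inequality, rather than with $\Phi''$, also handles critical points cleanly.
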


\begin{proof}
By \Cref{lem:bochner},
\[
\Delta|\nabla u|
\ge
|\nabla u|
\left(
|\mathrm{II}|^2+
|\nabla^\top\log|\nabla u||^2
\right)
\]
in the sense of distributions. Set
\[
X:=\nabla|\nabla u|-4u^{-1}|\nabla u|\nabla u.
\]
Using
\[
\langle\nabla|\nabla u|,\nabla u\rangle
=
H|\nabla u|^2,
\qquad
|\mathrm{II}|^2=H^2-2\kappa,
\]
where \(\kappa\) is the Gauss curvature of the level set, we obtain
\[
\operatorname{div}X
\ge
|\nabla u|
\left[
-2\kappa
+
|\nabla^\top\log|\nabla u||^2
+
\bigl(H-2u^{-1}|\nabla u|\bigr)^2
\right]
\]
in the sense of distributions.

Put \(s=t^{-1}\), and let \(0<\varepsilon<s\) be a regular value. On
\(\{u=r\}\), the outward flux of \(X\) from \(\{u<r\}\) is
\[
H|\nabla u|-4r^{-1}|\nabla u|^2.
\]
Testing the distributional inequality above with smooth approximations of
\(\mathbf 1_{\{\varepsilon<u<s\}}\), and then using the coarea formula, gives
\[
\begin{aligned}
&\int_{\{u=s\}}
\left(H|\nabla u|-4s^{-1}|\nabla u|^2\right)
-
\int_{\{u=\varepsilon\}}
\left(H|\nabla u|-4\varepsilon^{-1}|\nabla u|^2\right)\\
&\qquad\ge
\int_\varepsilon^s
\left[
\int_{\{u=r\}}
\left(
|\nabla^\top\log|\nabla u||^2
+
\bigl(H-2r^{-1}|\nabla u|\bigr)^2
\right)
-
2\int_{\{u=r\}}\kappa
\right]dr.
\end{aligned}
\]
The expansion
\[
u(x)=\frac{\capacity(K)}{|x|}+O(|x|^{-2})
\]
which may be differentiated twice (cf.\ \cite[(1.5)]{AM}) implies that 
\[
\int_{\{u=\varepsilon\}}
\left(H|\nabla u|-4\varepsilon^{-1}|\nabla u|^2\right)
=O(\varepsilon).
\]
For almost every \(r\), the level \(\{u=r\}\) is regular and connected, so
Gauss--Bonnet gives
\[
-2\int_{\{u=r\}}\kappa
=
-8\pi+8\pi\,\operatorname{genus}(\{u=r\})
\ge-8\pi.
\]
Letting \(\varepsilon\downarrow0\), we obtain
\[
\int_{\{u=s\}}
\left(H|\nabla u|-4s^{-1}|\nabla u|^2\right)
\ge-8\pi s.
\]
Since \(s=t^{-1}\), this proves the claim.
\end{proof}

\begin{remark}\label{rem:second-variation}
The estimate in \Cref{lem:boundary-estimate} also follows from the variation
formulas of \cite{AM}. Set
\[
Q(t):=-\Phi'(t)-\frac2t\bigl(\Phi(t)-4\pi\bigr).
\]
By \cite[Remark~1 and (1.10)--(1.11)]{AM}, Gauss--Bonnet, and the
connectedness of \(\Sigma_t\),
\[
Q'(t)
=-\frac1{t^2}
\left[
\int_{\Sigma_t}
\left(
\bigl(H-2t|\nabla u|\bigr)^2
+|\nabla^\top\log|\nabla u||^2
\right)
+8\pi\,\operatorname{genus}(\Sigma_t)
\right]
\le0
\]
at almost every regular parameter. Since the regular parameters have full
measure, convexity of \(\Phi\) gives, for \(1\le a<b\),
\[
\left[
-\frac2t\bigl(\Phi(t)-4\pi\bigr)
\right]_{t=a}^{t=b}
\le
\int_a^b\Phi''(t)\,dt
\le
\Phi'(b)-\Phi'(a).
\]
Thus \(Q(b)\le Q(a)\). Since \(\Phi(t)\to4\pi\) and \(\Phi'(t)\to0\), we
have \(Q(t)\to0\). Hence \(Q\ge0\), and the first-variation formula
recovers \Cref{lem:boundary-estimate}.
\end{remark}

\begin{lemma}\label{lem:equilibrium}
For every measurable \(A\subset K\),
\[
\frac1{4\pi}\int_{\partial K}v_A|\nabla u|=|A|,
\qquad
v_A(x):=\int_A\frac{dy}{|x-y|}.
\]
\end{lemma}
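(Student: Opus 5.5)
The plan is to reduce the statement to the single-point identity
\[
\frac1{4\pi}\int_{\partial K}\frac{|\nabla u(x)|}{|x-y|}\,d\sigma(x)=1\qquad\text{for every }y\in K,
\]
and then integrate it over \(y\in A\) using Fubini. Fubini is justified because all integrands are nonnegative. Indeed, by Tonelli,
\[
\int_{\partial K}v_A|\nabla u|=\int_A\left(\int_{\partial K}\frac{|\nabla u(x)|}{|x-y|}\,d\sigma(x)\right)dy,
\]
so once the inner integral equals \(4\pi\) for every \(y\in K\), the claim \(\frac1{4\pi}\int_{\partial K}v_A|\nabla u|=|A|\) follows at once.

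To prove the single-point identity, I would recognize \(\mu:=\frac1{4\pi}|\nabla u|\,d\sigma\) on \(\partial K\) as the equilibrium measure of \(K\). Its Newtonian potential
\[
w(y):=\int_{\partial K}\frac{d\mu(x)}{|x-y|}
\]
should equal \(1\) on \(K\) and \(u\) on \(\R^3\setminus K\). Concretely, extend \(u\) by \(1\) on \(K\). Then \(u\) is continuous on \(\R^3\) and harmonic off \(\partial K\). Since \(\partial K\) is \(C^3\), \(u\) is \(C^{1}\) up to \(\partial K\) from outside, and the distributional Laplacian is a single layer: \(\Delta u=-\partial_\nu u\, d\sigma\) on \(\partial K\), with \(\nu\) the outward normal. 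The Hopf lemma gives \(\partial_\nu u=-|\nabla u|<0\), so
\[
\Delta u=|\nabla u|\,d\sigma\quad\text{with the sign convention } \Delta\tfrac{1}{4\pi|x|}=-\delta,
\]
i.e.\ \(-\Delta u=-4\pi\mu\) up to sign bookkeeping. The sign should be checked so that the potential comes out positive: equivalently \(-\Delta u = 4\pi\mu\) as a positive measure, because \(u\) is superharmonic (constant inside, decaying harmonically outside).

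Then \(u-w\) is harmonic in the sense of distributions on all of \(\R^3\), hence smooth and harmonic by Weyl's lemma. Both \(u\) and \(w\) tend to \(0\) at infinity: for \(w\) this holds because \(\mu\) is a finite measure with compact support, and for \(u\) by definition. Liouville's theorem (or the maximum principle) then gives \(u\equiv w\), so \(w=1\) on \(K\), which is the desired identity. As a consistency check, the total mass of \(\mu\) is \(\capacity(K)\), matching the expansion \(u\sim\capacity(K)/|x|\).

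An alternative route avoids distributions: apply Green's second identity on \(\{|x|<R\}\setminus(K\cup B_\delta(y))\) with \(u\) and \(1/|x-y|\) for \(y\) inside \(K\), or directly on the region outside \(K\). This is the "two integrations by parts" mentioned in the introduction, and it gives the same conclusion.

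The main obstacle is careful bookkeeping rather than anything deep. One must justify the jump relation for the single layer, which needs \(C^1\) regularity of \(u\) up to \(\partial K\); this holds since \(\partial K\) is \(C^3\). One must also get the signs and the factor \(4\pi\) right. Points \(y\) on \(\partial K\) itself form a null set for Lebesgue measure, so they do not matter for the integral over \(A\). Even so, the identity holds there as well by continuity of the single-layer potential.
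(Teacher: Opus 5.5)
Your argument is correct, but it takes a different route from the paper's.

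\textbf{The paper's route.} The paper puts the Laplacian on \(v_A\). Since \(\Delta u=0=\Delta v_A\) in \(\R^3\setminus K\) and \(u=1\) on \(\partial K\), Green's second identity on the exterior turns \(\int_{\partial K}v_A|\nabla u|=-\int_{\partial K}v_A\,\partial_\nu u\) into \(-\int_{\partial K}\partial_\nu v_A\). The boundary terms on large spheres are \(O(R^{-1})\). The divergence theorem in \(K\) with \(\Delta v_A=-4\pi\mathbf 1_A\) then gives \(4\pi|A|\).

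\textbf{Your route.} You put the Laplacian on \(u\) instead. You identify \(-\Delta u=4\pi\mu\) for the glued function. You show via Weyl and Liouville that the potential \(w\) of \(\mu\) equals \(u\). This gives the pointwise identity \(w=1\) on \(K\), and Tonelli superposes it over \(y\in A\).

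\textbf{What each buys.}
\begin{itemize}
\item Your route proves a stronger, classical statement: the potential of \(\frac1{4\pi}|\nabla u|\,d\sigma\) is identically \(1\) on \(K\). It also makes the passage to arbitrary measurable \(A\) free of any regularity questions about \(v_A\).
\item The cost is more machinery: the jump relation for the glued function, Weyl's lemma, Liouville or the maximum principle, and continuity (or an a.e.\ argument) for the single layer.
\item The paper needs only two integrations by parts plus decay at infinity.
\end{itemize}
Your ``alternative route'' with \(1/|x-y|\) is essentially the paper's argument with \(\mathbf 1_A\) replaced by a point mass.

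\textbf{Sign slip to fix.} Your first displayed jump formulas \(\Delta u=-\partial_\nu u\,d\sigma\) and \(\Delta u=|\nabla u|\,d\sigma\) have the wrong sign. Testing against \(\phi\) gives \(\Delta u=(\partial_\nu u)\,d\sigma=-|\nabla u|\,d\sigma\), where \(\nu\) is the outward normal of \(K\) and \(\partial_\nu u\) is the exterior derivative. You do arrive at the correct conclusion \(-\Delta u=4\pi\mu\), consistent with superharmonicity of \(\min(1,u)\).
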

\begin{proof}
We recall that $\Delta v_A = -4\pi \mathbf{1}_A$. We extend $u$ to be constant $u=1$ in $K$. We may integrate by parts first into $\RR^3\setminus K$ and then into $K$ to find
\[
\int_{\partial K}v_A|\nabla u|= - \int_{\partial K}\nabla_\nu v_A = -\int_{K} \Delta v_A = 4\pi |A| 
\]
This completes the proof. 
\end{proof}
\begin{proposition}\label{prop:capacity}
Let $\Omega\subset\R^3$ be a bounded connected $C^3$ stationary domain of
volume $V$, and let $u$ be the capacitary potential of its hull $K$. Then
\[
\begin{aligned}
\lambda\capacity(K)-V
&=\frac1{4\pi}\int_{\partial K}H|\nabla u|
\ge\frac1\pi\int_{\partial K}|\nabla u|^2-2\\
&\ge\max\left\{
2,\,
\frac{16\pi\capacity(K)^2}{P(\Omega)}-2
\right\}.
\end{aligned}
\]
Consequently,
\begin{equation}\label{eq:stationary-lower}
\lambda\sqrt{\frac{P(\Omega)}{4\pi}}
\ge
\begin{cases}
V+2,&0<V\le6,\\[1mm]
4\sqrt{V-2},&V\ge6.
\end{cases}
\end{equation}
\end{proposition}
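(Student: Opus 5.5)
We integrate the first-variation identity $H+v_\Omega=\lambda$ against the weight $|\nabla u|$ over $\partial K$. This is legitimate because $\partial K\subset\partial\Omega$ and the outward normals agree there. Since $\int_{\partial K}|\nabla u|=4\pi\capacity(K)$ and, by \Cref{lem:equilibrium} applied with $A=\Omega\subset K$, $\int_{\partial K}v_\Omega|\nabla u|=4\pi V$, we get
\[
\int_{\partial K}H|\nabla u|=4\pi\lambda\capacity(K)-4\pi V .
\]
Dividing by $4\pi$ gives the first equality. One point needs care: on $\Sigma_1=\partial K$ the normal $-\nabla u/|\nabla u|$ is the outward normal of $K$, because $u$ decreases away from $K$. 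So $H$ in \Cref{lem:boundary-estimate} is the same $H$ as in the stationarity equation. Applying \Cref{lem:boundary-estimate} at $t=1$, which is a regular value, and dividing by $4\pi$ gives the first inequality $\ge\frac1\pi\int_{\partial K}|\nabla u|^2-2$.

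For the final lower bound we estimate $\int_{\partial K}|\nabla u|^2$ in two ways.

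\emph{First route.} By Cauchy--Schwarz,
\[
\int_{\partial K}|\nabla u|^2\ge \frac{(4\pi\capacity(K))^2}{P(K)}\ge\frac{16\pi^2\capacity(K)^2}{P(\Omega)},
\]
which yields the second entry of the max.

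\emph{Second route.} We show $\int_{\partial K}|\nabla u|^2=\Phi(1)\ge4\pi$, giving the entry $4-2=2$. This follows because $\Phi$ is nonincreasing with limit $4\pi$, as recalled from \cite{AM}.

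For the consequence \eqref{eq:stationary-lower}, set $c=\capacity(K)$ and $p=P(\Omega)/(4\pi)$. The inequality then reads
\[
\lambda c\ge V+\max\{2,\,4c^2/p-2\}.
\]
We use the isoperimetric-type capacity bound $c\le\sqrt{P(K)/(4\pi)}\le\sqrt p$, a classical fact (P\'olya--Szeg\H{o}) that may also follow from $\Phi(1)\ge4\pi$ with Cauchy--Schwarz. Combined with the first alternative, $\lambda\sqrt p\ge\lambda c\ge V+2$ for all $V$.

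For the second alternative, write $c=x\sqrt p$ with $x\in(0,1]$. We get
\[
\lambda\sqrt p\ge \frac{V-2}{x}+4x .
\]
Minimizing over $x>0$ gives the minimizer $x=\sqrt{V-2}/2$ and value $4\sqrt{V-2}$. This minimizer lies in $(0,1]$ exactly when $V\le6$. For $V\ge6$ the unconstrained minimum is therefore still a valid lower bound, giving $4\sqrt{V-2}$. For $V\le6$, the constrained minimum at $x=1$ gives $V+2$, which agrees with the first alternative. At $V=6$ both expressions equal $8$, so the cases match.

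The main obstacle is the capacity--perimeter comparison $\capacity(K)^2\le P(K)/(4\pi)$ in this normalization. If it is not available from earlier in the paper, I would derive it directly. Since $\Phi(1)\ge4\pi$ alone gives only a lower bound on $\int|\nabla u|^2$, it does not suffice. Instead I would use the monotonicity of $t\mapsto t^{-2}\cdot$ area of $\Sigma_t$ along the level-set flow, or simply cite P\'olya--Szeg\H{o}: among sets of given perimeter the ball maximizes capacity. For the ball, $\capacity=R$ and $P=4\pi R^2$, so equality holds there.
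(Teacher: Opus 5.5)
\textbf{Gap in the $V\le6$ case.} Your derivation of the displayed chain matches the paper and is fine: the equality via \Cref{lem:equilibrium}, the first inequality from \Cref{lem:boundary-estimate} at $t=1$ with the correct orientation, and the two lower bounds from $\Phi(1)\ge4\pi$ and Cauchy--Schwarz. The gap is in how you deduce \eqref{eq:stationary-lower} for $V\le6$. There you rely on $\capacity(K)\le\sqrt{P(K)/(4\pi)}$, and this is false as a general fact. Take thin prolate spheroids with semi-axes $a,b,b$ and $b\to0$. They have $\capacity\approx a/\log(2a/b)$ and area $\approx\pi^2ab$, so $\capacity^2/P\to\infty$. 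Capacity is not even bounded at fixed perimeter.

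The P\'olya--Szeg\H{o} theorem says the ball \emph{minimizes} capacity at fixed \emph{volume}, which is a different statement. Your fallback fails for the same reason: $t^{-2}|\Sigma_t|\to4\pi\capacity(K)^2$ as $t\to\infty$, so monotonicity of $t^{-2}|\Sigma_t|$ would imply the same false inequality. As a sanity check, if $\capacity(K)\le\sqrt{P/(4\pi)}$ were available, you would get $\lambda\sqrt{P/(4\pi)}\ge V+2$ for every $V$. That is strictly stronger than the stated bound for $V>6$.

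Your constrained minimization is also misplaced. On $x\in(0,1]$ with $2<V\le6$, the minimum of $4x+(V-2)/x$ is attained at $x=\sqrt{V-2}/2$, not at $x=1$. So under your setup the second alternative yields only $4\sqrt{V-2}$ there.

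\textbf{The missing idea.} No a priori capacity--perimeter comparison is needed, because the two entries of the max are complementary. With $x:=\capacity(K)\sqrt{4\pi/P(\Omega)}$ you have
\[
\lambda\sqrt{\frac{P(\Omega)}{4\pi}}\ \ge\ \max\left\{\frac{V+2}{x},\ 4x+\frac{V-2}{x}\right\}.
\]
If $x\le1$, the first entry is $\ge V+2$. This is also $\ge4\sqrt{V-2}$ for $V\ge 6$, since $(V+2)^2-16(V-2)=(V-6)^2$. If $x\ge1$ and $V\le6$, the second entry is nondecreasing on $[1,\infty)$ with value $V+2$ at $x=1$. If $V\ge6$, the second entry is $\ge4\sqrt{V-2}$ by AM--GM.

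This case split is exactly the paper's argument. Your $V\ge6$ case is already correct as written, since AM--GM never used $x\le1$. The $V\le6$ case, which is the one used to show balls minimize, needs the split.
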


\begin{proof}
Applying \Cref{lem:equilibrium} with $A=\Omega$ and integrating the
Euler--Lagrange equation against $(4\pi)^{-1}|\nabla u|$ gives the equality
in the first line. The first inequality follows from
\Cref{lem:boundary-estimate} with $t=1$, while monotonicity of $\Phi$ and
Cauchy--Schwarz give the second:
\[
\int_{\partial K}|\nabla u|^2
=
\Phi(1)\ge4\pi,
\]
and
\[
\int_{\partial K}|\nabla u|^2
\ge
\frac{\left(\int_{\partial K}|\nabla u|\right)^2}{P(K)}
\ge
\frac{16\pi^2\capacity(K)^2}{P(\Omega)}.
\]

Set
\[
s:=\capacity(K)\sqrt{\frac{4\pi}{P(\Omega)}}.
\]
Then
\[
\lambda\sqrt{\frac{P(\Omega)}{4\pi}}
\ge
\max\left\{
\frac{V+2}{s},\,
4s+\frac{V-2}{s}
\right\}.
\]
If $s\le1$ the estimate follows from considering only the first term. On $s\ge1$, the second has minimum $V+2$
for $V\le6$ and $4\sqrt{V-2}$ for $V\ge6$.
\end{proof}

\section{Proofs of the theorems}

By \cite[Theorem~2.7]{BonaciniCristoferi}, every minimizer has a bounded
representative with $C^{3}$ boundary. It is connected, since translating one
component to infinity strictly decreases the Coulomb interaction. It is
stationary, so using a scaling argument in the first variation of energy (cf. \cite[Lemma 13]{CR}) gives
\begin{equation}\label{eq:dilation}
3V\lambda
=2P(\Omega)+5D(\Omega)
=5\cE(\Omega)-3P(\Omega).
\end{equation}
The proofs of the theorems now follow by comparing
\eqref{eq:stationary-lower} with the one- and two-ball competitors. Let
\(B_V\) be a ball of volume \(V=\frac 43 \pi R^3\), and set
\[
P_B:=P(B_V)=4\pi R^2=\frac{3V}{R},
\qquad
D(B_V)=\frac V5P_B.
\]
See \cite[Lemma 2]{CR} for a calculation of $D(B_V)$. 
We then define 
\[
\delta:=\sqrt{\frac{P(\Omega)}{P_B}}-1\ge0
\]
by the isoperimetric inequality. Recall that $\delta=0$ implies that $\Omega$ is a ball.
\begin{proof}[Proof of \Cref{theo:main}, balls minimize up to \(V_*\)]
By \cite[Theorem~1]{FrankNam:existence-nonexistence}, a minimizer
\(\Omega\) exists. Comparison with \(B_V\) gives
\[
\cE(\Omega)\le\cE(B_V)=\frac{V+5}{5}P_B,
\qquad
P(\Omega)=P_B(1+\delta)^2.
\]
Hence, by \eqref{eq:dilation},
\[
\begin{aligned}
\lambda\sqrt{\frac{P(\Omega)}{4\pi}}
&=
\frac{R(1+\delta)}{3V}
\bigl(5\cE(\Omega)-3P(\Omega)\bigr)\\
&\le
\bigl(V+5-3(1+\delta)^2\bigr)(1+\delta).
\end{aligned}
\]
Since \(V\le V_*<4\), \eqref{eq:stationary-lower} gives the opposite bound
\[
\lambda\sqrt{\frac{P(\Omega)}{4\pi}}\ge V+2.
\]
If \(\delta>0\), then
\[
V+2-\bigl(V+5-3(1+\delta)^2\bigr)(1+\delta)
=
\delta\bigl(4-V+9\delta+3\delta^2\bigr)>0,
\]
a contradiction. Thus \(\delta=0\), and the equality case of the
isoperimetric inequality shows that \(\Omega\) is a translate of \(B_V\).
\end{proof}

\begin{proof}[Proof of \Cref{theo:main}, no minimizer after \(V_*\)]
By \cite{FrankKillipNam}, it remains to consider \(V_*<V\le8\). Suppose
that a minimizer \(\Omega\) exists, and set
\[
L:=2^{-2/3}(V+10).
\]
Comparison with two equal balls whose distance tends to infinity gives
\[
\cE(\Omega)
\le
2\cE(B_{V/2})
=
\left(
2^{1/3}+\frac{V}{5\,2^{2/3}}
\right)P_B
=
\frac L5P_B.
\]
Consequently, \eqref{eq:dilation} gives
\[
\begin{aligned}
\lambda\sqrt{\frac{P(\Omega)}{4\pi}}
&=
\frac{R(1+\delta)}{3V}
\bigl(5\cE(\Omega)-3P(\Omega)\bigr)\\
&\le
\bigl(L-3(1+\delta)^2\bigr)(1+\delta).
\end{aligned}
\]
The same comparison gives
\begin{equation}\label{eq:two-ball-perimeter}
(1+\delta)^2
=
\frac{P(\Omega)}{P_B}
\le
\frac{\cE(\Omega)}{P_B}
\le
\frac L5
<
2^{4/3}.
\end{equation}

If \(V_*<V\le6\), then \eqref{eq:two-ball-perimeter} and
\(2^{-2/3}(V_*+10)=V_*+5\) give
\begin{align*}
V+2-\bigl(L-3(1+\delta)^2\bigr)(1+\delta)
&=
\delta\bigl(4-V_*+9\delta+3\delta^2\bigr)\\
&\quad
+(V-V_*)\bigl(1-2^{-2/3}(1+\delta)\bigr)>0.
\end{align*}
This contradicts the first case of \eqref{eq:stationary-lower}.

If \(6<V\le8\), then
\[
\bigl(L-3(1+\delta)^2\bigr)(1+\delta)
\le
\max_{s\ge0}(Ls-3s^3)
=
\frac{(V+10)^{3/2}}9
<
4\sqrt{V-2}.
\]
Here \((V+10)^3/(V-2)\) is nonincreasing on \([6,8]\), and its value at
\(6\) is \(1024<1296\). This contradicts the second case of
\eqref{eq:stationary-lower}.
\end{proof}
\begin{proof}[Proof of \Cref{cor:binding}]
By \cite[Theorem~3.2]{FrankLieb:compact-least-density}, the infimum defining \(e_*\) is attained.
An optimizer minimizes \(\cE\) for fixed volume and is therefore a ball by
\Cref{theo:main}. Since
\[
\frac{\cE(B_V)}{V}
=
(36\pi)^{1/3}
\left(
V^{-1/3}+\frac15V^{2/3}
\right),
\]
the minimum occurs at \(V=5/2\) and has the stated value.
\end{proof}

\appendix

\section{}

\begin{lemma}\label{lem:levels}
Take $u$ as in \Cref{sec:cap}. For every regular value $s\in(0,1)$, the level set $\{u=s\}$ is connected.
\end{lemma}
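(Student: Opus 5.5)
The plan is to show that a disconnected regular level set would force a nonconstant harmonic function to attain an interior extremum. The ingredients are the maximum principle on the exterior domain $U:=\R^3\setminus K$, the connectedness of $K$ and of $U$ (both noted in \Cref{sec:cap}), and the topology of $\R^3$.

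Fix a regular value $s\in(0,1)$. Set $\Sigma=\{u=s\}$, $A=\{u>s\}\cup K$ and $E=\{u<s\}$. Since $u\to 0$ at infinity and $u=1$ on $\partial K$, $\Sigma$ is a compact smooth embedded surface in $U$, and it separates $\R^3$ into $A$ and $E$.

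First I show that $E$ and $A$ are connected.
\begin{itemize}
\item Take a component $E'$ of $E$. If $E'$ were bounded, then $\partial E'\subset\Sigma$, because $E'$ stays away from $K$ where $u=1>s$. So $u=s$ on $\partial E'$ while $u<s$ inside $E'$. This contradicts the minimum principle for the harmonic function $u$ on the bounded open set $E'$. Hence every component of $E$ is unbounded. Since $E$ contains the complement of a large ball, which is connected, $E$ has exactly one component.
\item Take a component $A'$ of $\{u>s\}$. If $\overline{A'}$ did not meet $K$, then $A'$ would be a bounded open subset of $U$ with $u=s$ on $\partial A'$ and $u>s$ inside. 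This contradicts the maximum principle. So every component of $\{u>s\}$ touches $\partial K$. Since $K$ is connected and $\{u>s\}$ contains a collar of $\partial K$, the set $A$ is connected.
\end{itemize}

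Next I conclude that $\Sigma$ is connected. This is the step I expect to be the main obstacle, because it needs a little topology. Suppose $\Sigma=\Sigma_1\sqcup\Sigma_2$ with both pieces nonempty, closed and open in $\Sigma$. Since $s$ is regular, each point of $\Sigma$ has a neighborhood split by $\Sigma$ into one side in $A$ and one side in $E$. Choose a point $p_1\in\Sigma_1$. Starting just on the $A$ side of $p_1$, I connect to the $E$ side of $p_1$ by a path inside $A$ and then inside $E$; this is possible because both sets are connected. Closing this up by the short segment through $p_1$ gives a closed loop $\gamma$ that crosses $\Sigma$ transversally exactly once, and crosses $\Sigma_1$ exactly once.

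The mod 2 intersection number of a closed loop with a closed embedded surface in $\R^3$ vanishes, because $H_1(\R^3;\mathbb Z/2)=0$ (equivalently, every closed surface in $\R^3$ separates). The loop $\gamma$ meets $\Sigma_1$ transversally exactly once, which gives intersection number $1$, a contradiction.

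One point in this argument must be checked: the transversality of the constructed loop, which holds because $\nabla u\neq0$ on $\Sigma$. A cleaner alternative for the final step is to use the Mayer–Vietoris sequence for $\R^3=\overline A\cup\overline E$ with $\overline A\cap\overline E=\Sigma$. This gives an exact sequence
\[
0=H_1(\R^3)\to \tilde H_0(\Sigma)\to \tilde H_0(\overline A)\oplus\tilde H_0(\overline E)=0,
\]
so $\tilde H_0(\Sigma)=0$, and $\Sigma$ is connected.

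For this Mayer–Vietoris argument, I use collar neighborhoods, which exist because $\Sigma$ is a regular level set, so that $\overline A$ and $\overline E$ deformation retract from open sets.
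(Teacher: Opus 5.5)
This is essentially the paper's proof: you use the maximum and minimum principles to show that $\{u>s\}\cup K$ and $\{u<s\}$ are connected, and then a separation fact in $\R^3$ gives connectedness of $\{u=s\}$. The paper cites Jordan--Brouwer for that last step, implicitly using that a closed surface with $k$ components has $k+1$ complementary components, and your Mayer--Vietoris sequence proves exactly this.

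One slip in your loop variant: the path from the $A$-side to the $E$-side of $p_1$ must itself cross $\Sigma$, so $\gamma$ meets $\Sigma$ twice, not once. You should route that crossing through a point $p_2\in\Sigma_2$, so that $\gamma$ meets $\Sigma_1$ exactly once; the Mayer--Vietoris version needs no such repair.
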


\begin{proof}
Extend \(u\) by one on \(K\). By the maximum principle, every component of
\(\{u>s\}\) meets \(K\). Since \(K\) is connected, \(\{u>s\}\) is connected.
Since \(u\to0\), the set \(\{u<s\}\) has a unique unbounded component, while
the minimum principle rules out bounded components. Thus
\(S^3\setminus\{u=s\}\) has exactly two components. The assertion thus follows from the Jordan Brouwer separation theorem.
\end{proof}

\begin{lemma}\label{lem:bochner}
Let \(u\) be harmonic in an open set \(U\subset\R^3\). Then the
right-hand side below is locally integrable, and
\[
\Delta|\nabla u|
\ge
|\nabla u|
\left(
|\mathrm{II}|^2+
|\nabla^\top\log|\nabla u||^2
\right)
\]
in the sense of distributions, where the right-hand side is extended by zero on \(\{|\nabla u|=0\}\).
\end{lemma}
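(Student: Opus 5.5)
We prove the improved Kato inequality for harmonic functions in two stages. First we establish it pointwise on the open set $U^+:=\{|\nabla u|>0\}$, where $|\nabla u|$ is smooth. Then we pass to distributions across the critical set $Z:=\{\nabla u=0\}$.

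\emph{Pointwise computation on $U^+$.} Set $w=|\nabla u|$. Bochner's formula with $\Delta u=0$ and flat metric gives $\tfrac12\Delta w^2=|\nabla^2u|^2$. Hence
\[
w\Delta w=|\nabla^2 u|^2-|\nabla w|^2 .
\]
Work in an orthonormal frame $e_1,e_2,e_3$ with $e_3=\nabla u/w$, and write $u_{ij}=\nabla^2u(e_i,e_j)$. The second fundamental form of the level set is $\mathrm{II}_{ab}=u_{ab}/w$ for $a,b\in\{1,2\}$, up to a sign irrelevant for $|\mathrm{II}|^2$. We also have $\nabla w=(u_{13},u_{23},u_{33})$, so $\nabla^\top w=(u_{13},u_{23})$. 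Moreover $u_{33}=-(u_{11}+u_{22})$.

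Expanding,
\[
|\nabla^2u|^2-|\nabla w|^2=\sum_{a,b}u_{ab}^2+\sum_a u_{a3}^2 .
\]
This equals $w^2|\mathrm{II}|^2+|\nabla^\top w|^2=w^2\bigl(|\mathrm{II}|^2+|\nabla^\top\log w|^2\bigr)$. Dividing by $w$ gives the claimed inequality on $U^+$, in fact with equality. The dropped Kato-type term $|\nabla w|^2$ vs $|\nabla^2u|^2$ is accounted for exactly. Note that this version does not even need the refined $\tfrac32$ constant, since the right-hand side only involves $\mathrm{II}$ and the tangential gradient.

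\emph{Passage across $Z$.} If $u$ is constant on a component of $U$, everything vanishes there. Otherwise $Z$ is a real-analytic set of dimension at most $1$; in fact for harmonic $u$ in $\R^3$ the critical set has Hausdorff dimension at most $1$. Two things must be shown.

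The first is local integrability of $w\bigl(|\mathrm{II}|^2+|\nabla^\top\log w|^2\bigr)$. Since this quantity equals $\Delta w$ on $U^+$, it is nonnegative there, and integrability will follow from the distributional argument below.

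The second is the distributional inequality itself. I would use the regularization $w_\varepsilon=\sqrt{|\nabla u|^2+\varepsilon^2}$, which is smooth. A direct computation gives
\[
w_\varepsilon\Delta w_\varepsilon=|\nabla^2u|^2-\frac{|\nabla^2u\,\nabla u|^2}{w_\varepsilon^2}\ \ge\ |\nabla^2u|^2-|\nabla w|^2\,\frac{w^2}{w_\varepsilon^2}\ \ge 0 .
\]
Hence $\Delta w_\varepsilon\ge \frac{w^2}{w_\varepsilon^3}\cdot w^2\bigl(|\mathrm{II}|^2+|\nabla^\top\log w|^2\bigr)\mathbf 1_{U^+}$.

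Now test against $\varphi\ge0$ in $C^\infty_c(U)$. The left side satisfies $\int w_\varepsilon\Delta\varphi\to\int w\Delta\varphi$, since $w_\varepsilon\to w$ locally uniformly. On the right, the integrand increases to $w\bigl(|\mathrm{II}|^2+|\nabla^\top\log w|^2\bigr)\varphi$ on $U^+$, so Fatou or monotone convergence applies. This yields both the local integrability, because $\int w\Delta\varphi$ is finite, and the inequality with the right-hand side extended by zero on $Z$.

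The main obstacle is making sure the regularized inequality really has the right form. The key check is the pointwise identity $|\nabla^2u\,\nabla u|^2=w^2|\nabla w|^2$, so that $w_\varepsilon\Delta w_\varepsilon$ dominates $\frac{w^2}{w_\varepsilon^2}\bigl(|\nabla^2u|^2-|\nabla w|^2\bigr)$, using $|\nabla^2u|^2\ge |\nabla^2 u|^2 w^2/w_\varepsilon^2$. Once this holds, the monotone limit is routine, and no fine structure of $Z$ is needed.
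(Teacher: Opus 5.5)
Your proof is correct. It shares both main ingredients with the paper: the exact identity $\Delta|\nabla u| = |\nabla u|\bigl(|\mathrm{II}|^2+|\nabla^\top\log|\nabla u||^2\bigr)$ on $\{|\nabla u|>0\}$, obtained from Bochner in an adapted frame, and the regularization $(|\nabla u|^2+\varepsilon)^{1/2}$. The difference is in how the critical set is handled.

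The paper uses the regularization only to get $\Delta|\nabla u|\ge 0$, so that $\Delta|\nabla u|$ is a nonnegative Radon measure. It then splits this measure into two parts. Its restriction to the open set $\{|\nabla u|>0\}$ must coincide with the smooth density there. Its restriction to $\{\nabla u=0\}$ is nonnegative. Local integrability then follows because the measure is locally finite.

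You instead keep the quantitative lower bound $\Delta w_\varepsilon \ge w^4 w_\varepsilon^{-3}\bigl(|\mathrm{II}|^2+|\nabla^\top\log w|^2\bigr)\mathbf 1_{\{w>0\}}$ at the regularized level. You then pass to the limit by monotone convergence, using $w^4/w_\varepsilon^3\uparrow w$. This yields the inequality and local integrability in one step (the latter by testing with $\varphi\ge0$, $\varphi\equiv1$ on a compact set). It needs neither the Riesz representation of nonnegative distributions nor restriction of measures to open and closed sets.

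So your version is a bit more self-contained and elementary. The paper's is shorter once those measure-theoretic facts are taken for granted.

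Minor points:
\begin{itemize}
\item The remarks on the dimension of the critical set and on $u_{33}=-(u_{11}+u_{22})$ are never used, as you note yourself.
\item The middle ``$\ge$'' in your display for $w_\varepsilon\Delta w_\varepsilon$ is actually an equality, with the convention that the term vanishes on $\{\nabla u=0\}$.
\end{itemize}
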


\begin{proof}
For \(\varepsilon>0\), Bochner's formula and Cauchy--Schwarz give
\[
\Delta\bigl(|\nabla u|^2+\varepsilon\bigr)^{1/2}
=
\frac{|D^2u|^2}{\bigl(|\nabla u|^2+\varepsilon\bigr)^{1/2}}
-
\frac{|D^2u(\nabla u,\cdot)|^2}
     {\bigl(|\nabla u|^2+\varepsilon\bigr)^{3/2}}
\geq 0.
\]
Letting \(\varepsilon\downarrow0\), we find that \(|\nabla u|\) is
subharmonic. In particular, \(\Delta|\nabla u|\) is a nonnegative
Radon measure. On \(\{|\nabla u|>0\}\), choose an orthonormal frame with
\(e_3=\nabla u/|\nabla u|\) and \(e_1,e_2\) tangent to the level sets.
Writing \(u_{ij}=D^2u(e_i,e_j)\), the classical Bochner identity gives
\[
\begin{aligned}
\Delta|\nabla u|
=
\frac{|D^2u|^2-|\nabla|\nabla u||^2}{|\nabla u|} =
\frac{\displaystyle
\sum_{a,b=1}^2u_{ab}^2+\sum_{a=1}^2u_{a3}^2}
{|\nabla u|} =
|\nabla u|
\left(
|\mathrm{II}|^2+
|\nabla^\top\log|\nabla u||^2
\right).
\end{aligned}
\]
Thus the restriction of \(\Delta|\nabla u|\) to
\(\{|\nabla u|>0\}\) is given by the density on the right-hand side,
while its restriction to \(\{|\nabla u|=0\}\) is nonnegative. The
claim follows.
\end{proof}

\bibliographystyle{alpha}
\bibliography{bib}

\end{document}